\newtheorem{theorem}{Theorem}[section]
\newtheorem{lemma}[theorem]{Lemma}
\theoremstyle{definition}
\newtheorem{example}[theorem]{Example}
\theoremstyle{remark}
\numberwithin{equation}{section}
\begin{document}

\title[Quasi-alternating links and $Q$-polynomials]
{Quasi-alternating links and $Q$-polynomials}

\author{Masakazu Teragaito}
\address{Department of Mathematics and Mathematics Education, Hiroshima University,
1-1-1 Kagamiyama, Higashi-hiroshima, Japan 739-8524.}
\email{teragai@hiroshima-u.ac.jp}
\thanks{The author was partially supported by Japan Society for the Promotion of Science,
Grant-in-Aid for Scientific Research (C), 25400093.
}%

\subjclass[2010]{Primary 57M25}



\keywords{quasi-alternating link, $Q$-polynomial, determinant}

\begin{abstract}
Qazaqzeh and Chbili showed that
for any quasi-alternating link,
the degree of $Q$-polynomial is less than its determinant.
We give a refinement of their evaluation.
\end{abstract}

\maketitle

\section{Introduction}\label{sec:intro}

The notion of quasi-alternating links was introduced by Ozsv\'{a}th and Szab\'{o}
\cite{OS}, and it is recognized as one of important classes of links in knot theory.
For example, see \cite{CK, G, GW, MO, QC, QQJ, Wi}.
We recall the definition of quasi-alternating links.

The set of $\mathcal{Q}$ of \textit{quasi-alternating links\/}
is the smallest set of links which satisfies the following properties.
\begin{enumerate}
\item The unknot is in $\mathcal{Q}$.
\item Let $L$ be a link whose diagram $D$ has a crossing $c$ such that
\begin{enumerate}
\item both resolutions $L_\infty$ and $L_0$, obtained from $D$ by smoothing
the crossing $c$ as in Figure \ref{fig:qa}, lie in $\mathcal{Q}$; and
\item $\det L=\det L_\infty+\det L_0$.
\end{enumerate}
Then $L$ lies in $\mathcal{Q}$.
\end{enumerate}
Such a crossing $c$ is called a \textit{quasi-alternating crossing}.

\begin{figure}[tb]
\includegraphics*[scale=0.8]{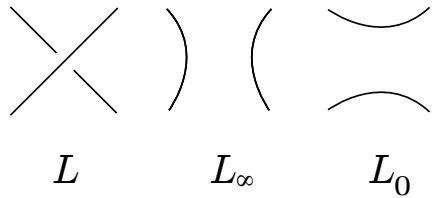}
\caption{Two resolutions of $L$}\label{fig:qa}
\end{figure}

Alternating knots and non-split alternating links are quasi-alternating \cite{OS}.
However, it is not an easy task to determine whether a given knot or link is quasi-alternating or not, in general.
For example, Greene \cite{G} showed that double branched covers do not bound
negative definite $4$-manifolds without homological torsion in order to
prove that the targets are not quasi-alternating.
Also, knot Floer homology and Khovanov homology are known to be an obstruction to 
a link being quasi-alternating \cite{MO}.

On the other hand,
Qazaqzeh and Chbili \cite{QC} found a very simple constraint 
on the highest degree of $Q$-polynomial
for quasi-alternating links.

For unoriented links, $Q$-polynomials were introduced by \cite{BLM} and \cite{Ho}.
Let $L$ be an unoriented link.
Its $Q$-polynomial $Q_L$ is a Laurent polynomial in $\mathbb{Z}[x,x^{-1}]$,
defined as follows.
\begin{enumerate}
\item For the unknot $U$, $Q_U=1$.
\item $Q_{L_+}+Q_{L_-}=x(Q_{L_\infty}+Q_{L_0})$, where
$L_+,L_-,L_\infty,L_0$ are four links
which are identical except in a small region where
they look like as in Figure \ref{fig:q}.
\end{enumerate}

\begin{figure}[tb]
\includegraphics*[scale=0.8]{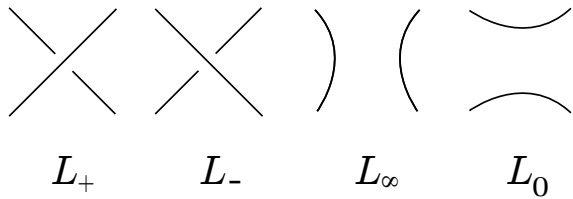}
\caption{Resolutions}\label{fig:q}
\end{figure}

\begin{theorem}[Theorem 1.2 of \cite{QC}]\label{thm:QC}
For any quasi-alternating link $L$,
\[
\deg Q_L\le \det L-1.
\]
\end{theorem}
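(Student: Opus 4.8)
The plan is to argue by induction on $\det L$. To see this is well founded, note first that every quasi-alternating link has $\det L\ge 1$: the unknot does, and if $L$ has a quasi-alternating crossing $c$ then $\det L=\det L_\infty+\det L_0\ge 2$ by induction. Hence a quasi-alternating link with $\det L=1$ admits no quasi-alternating crossing (the two resolution determinants would be positive integers summing to $1$), so it must be the unknot; and $\deg Q_U=0=\det U-1$ gives the base case. For the inductive step, fix a quasi-alternating crossing $c$ of a diagram $D$ of $L$. Then $L_\infty,L_0\in\mathcal Q$ with $\det L_\infty<\det L$ and $\det L_0<\det L$, so the inductive hypothesis yields $\deg Q_{L_\infty}\le\det L_\infty-1$ and $\deg Q_{L_0}\le\det L_0-1$.

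Next, apply the $Q$-skein relation at $c$. Let $L^{*}$ be the link obtained from $D$ by switching $c$; then $L$ and $L^{*}$ are the two ``$L_\pm$'' links at $c$, and both have $L_\infty$ and $L_0$ as their smoothings at $c$, so
\[
Q_L+Q_{L^{*}}=x\,(Q_{L_\infty}+Q_{L_0}).
\]
Since $\det L_\infty,\det L_0\ge 1$ and $\det L=\det L_\infty+\det L_0$, the inductive bounds give
\[
\deg\bigl(x\,(Q_{L_\infty}+Q_{L_0})\bigr)\le\max\{\det L_\infty,\det L_0\}\le\det L_\infty+\det L_0-1=\det L-1 .
\]
Thus everything reduces to the single remaining estimate $\deg Q_{L^{*}}\le\det L-1$, after which $\deg Q_L\le\det L-1$ is immediate from the displayed identity.

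The term $Q_{L^{*}}$ is the crux, and the step I expect to be the main obstacle: $L^{*}$ need not be quasi-alternating, so the inductive hypothesis does not apply to it, and the skein identity controls only the \emph{sum} $Q_L+Q_{L^{*}}$, not the two summands separately. As a first step I would observe that $L^{*}$ has strictly smaller determinant than $L$. Indeed, $\det M=|\langle M\rangle|$ at a primitive eighth root of unity $A$ with $A^{2}=i$, and $\langle M\rangle=A\langle M_0\rangle+A^{-1}\langle M_\infty\rangle$; hence $\det M\le\det M_0+\det M_\infty$ for every crossing, with equality only when the two summands are positively proportional. Switching $c$ multiplies the ratio of the two summands by $A^{-4}=-1$, so (as $\det L_\infty,\det L_0\neq 0$) equality cannot hold for both $L$ and $L^{*}$; it holds for $L$, hence $\det L^{*}<\det L$. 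Turning this into $\deg Q_{L^{*}}\le\det L-1$ without circularity then seems to require proving, by induction, a statement strictly stronger than the theorem — for instance, carrying along an appropriate sign condition on the top-degree coefficient of the $Q$-polynomial of a quasi-alternating link (true for the unknot and designed to be preserved by the recursion), which would prevent the leading terms of $Q_L$ and $Q_{L^{*}}$ from cancelling in the identity above and so pin down $\deg Q_L$ on its own. Finding exactly the right auxiliary invariant to thread through the induction is where the real difficulty lies; an alternative would be to avoid $L^{*}$ altogether by combining the classical degree bound $\deg Q_M\le c(M)-1$ (from a minimal crossing diagram) with an inequality $c(L)\le\det L$ for quasi-alternating links.
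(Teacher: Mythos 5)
Your induction-on-determinant skeleton is the right one, and your base case and the estimate $\deg\bigl(x(Q_{L_0}+Q_{L_\infty})\bigr)\le\det L-1$ are fine; but the proof is not complete, and the missing piece is exactly the point you flag yourself: you never bound $\deg Q_{L^*}$. The skein relation only controls the sum $Q_L+Q_{L^*}$, and since $L^*$ need not be quasi-alternating, no inductive information is available for it. Your observation that $\det L^*<\det L$ is correct (the two bracket summands, nonzero because $\det L_0,\det L_\infty\ge 1$, become negatively proportional after the switch), but it is of no use here precisely because the theorem being proved only concerns quasi-alternating links, so a smaller determinant for $L^*$ buys nothing. Neither of your proposed repairs closes the gap: you do not exhibit (or verify the propagation of) any sign condition on top coefficients, and in the skein identity the relevant cancellation would have to be ruled out for the non-quasi-alternating link $L^*$, about which the induction says nothing; and the alternative route via $c(L)\le\det L$ for quasi-alternating links is unavailable --- that inequality is only known for non-split alternating links (it is an open conjecture for quasi-alternating links, and in fact the theorem you are trying to prove is the evidence usually cited for it), so invoking it would be circular.

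What the actual argument does differently is to quarantine the $L^*$ problem inside a separate, purely diagrammatic lemma (Lemma~\ref{lem:QC}, i.e.\ Lemma~2.2 of \cite{QC}): for \emph{any} link and \emph{any} crossing, $\deg Q_L\le\max\{\deg Q_{L_0},\deg Q_{L_\infty}\}+1$. That lemma is proved by its own induction on the diagram (crossing changes and smoothings, in the spirit of the classical bound $\deg Q\le c-1$ of \cite{BLM}), where the switched link $L^*$ is handled because switching does not change the two resolutions and one can drive the diagram toward an unlink. Once that lemma is in hand, the determinant induction is exactly your first two paragraphs: at a quasi-alternating crossing, $\deg Q_L\le\max\{\det L_0-1,\det L_\infty-1\}+1\le\det L_0+\det L_\infty-1=\det L-1$, with no need to ever mention $Q_{L^*}$. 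So the gap in your write-up is a genuine missing lemma, not a cosmetic omission: without Lemma~\ref{lem:QC} (or an equivalent device), the skein relation alone cannot separate $\deg Q_L$ from $\deg Q_{L^*}$.
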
 

For example, the knot $8_{19}$, which is the torus knot of type $(3,4)$,
has determinant $3$, but the degree of its $Q$-polynomial is $6$.
Thus $8_{19}$ is not quasi-alternating.

In general, for any link but the unknot, the degree of its $Q$-polynomial
is less than the crossing number (\cite{BLM}).
And, it is a classical fact that the crossing number is less than or equal to the determinant
for any non-split alternating link (\cite{B,C}).
Thus Theorem  \ref{thm:QC} can be seen as a natural generalization
of the same evaluation for non-split alternating links.

The purpose of this short paper is to give a slight improvement of
the evaluation by Qazaqzeh and Chbili \cite{QC}.

\begin{theorem}\label{thm:main}
Let $L$ be a quasi-alternating link.
If $L$ is not a $(2,n)$-torus link, then
\[
\deg Q_L\le \det L-2.
\]
\end{theorem}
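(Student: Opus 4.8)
The plan is to push the inductive argument behind Theorem~\ref{thm:QC} one step further, working by strong induction on $\det L$. For small determinant (say $\det L\le 4$) every quasi-alternating link is already a $(2,n)$-torus link, so the statement is vacuous; this anchors the induction. For the inductive step, pick a quasi-alternating crossing $c$ in a diagram of $L$, with smoothings $L_0,L_\infty\in\mathcal{Q}$ satisfying $\det L=\det L_0+\det L_\infty$. Since both $Q$ and $\mathcal{Q}$ are invariant under mirror image, I may assume $L=L_+$ at $c$, so that the defining skein relation for $Q$ gives
\[
Q_L=x\bigl(Q_{L_0}+Q_{L_\infty}\bigr)-Q_{L_-},
\]
where $L_-$ denotes $L$ with the crossing $c$ switched.

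The two twisting terms are handled by the induction hypothesis together with Theorem~\ref{thm:QC}: $\deg(xQ_{L_0})=1+\deg Q_{L_0}\le\det L_0$, and likewise $\deg(xQ_{L_\infty})\le\det L_\infty$, and each of these is $\le\det L-2$ as soon as the complementary smoothing is not the unknot; moreover, if $L_\infty$ is the unknot while $L_0$ is not a $(2,n)$-torus link, the strengthened hypothesis gives $\deg Q_{L_0}\le\det L_0-2$ and again $\deg(xQ_{L_0})\le\det L-2$. Hence $\deg\bigl(x(Q_{L_0}+Q_{L_\infty})\bigr)\le\det L-2$ except when one smoothing is the unknot and the other attains equality in Theorem~\ref{thm:QC}; by induction, that other smoothing is then a torus link $T(2,m)$ with $m=\det L-1$. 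For the remaining term, rewriting the relation as $Q_{L_-}=x(Q_{L_0}+Q_{L_\infty})-Q_L$ and invoking Theorem~\ref{thm:QC} for $L$, $L_0$ and $L_\infty$ only yields $\deg Q_{L_-}\le\det L-1$. To improve this to $\det L-2$ I would show that switching a quasi-alternating crossing strictly lowers the determinant; in fact one expects $\det L_-=|\det L_0-\det L_\infty|\le\det L-2$, which should come out of the Goeritz matrix, or of the Montesinos trick applied to the double branched cover (using that double branched covers of quasi-alternating links are L-spaces). I expect this determinant estimate to be the main obstacle. Granting it, $\det L_-<\det L$, and the term $Q_{L_-}$ is then controlled either by the induction hypothesis (if $L_-\in\mathcal{Q}$) or by the classical bound $\deg Q_{L_-}\le c(L_-)-1$ of \cite{BLM}.

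It remains to dispose of the exceptional configuration: say $L_\infty$ is the unknot and $L_0=T(2,m)$ with $m=\det L-1$. Here $L$ is obtained by inserting a single crossing into a two-string tangle $T$ one of whose closures is the unknot and the other $T(2,m)$, and the plan is to argue that this forces $L$ itself to be a $(2,n)$-torus link, contradicting the hypothesis: for a rational tangle $T$ this is a short continued-fraction computation (adding one crossing to an integer tangle reproduces a $(2,n)$-torus link), and the real work is to rule out non-rational $T$ (equivalently, to replace $c$ by a quasi-alternating crossing both of whose smoothings are nontrivial). One can cross-check this against the polynomial identity $Q_L=xQ_{T(2,m)}+x-Q_{L_-}$: since the recursion $Q_{T(2,n)}=x\bigl(Q_{T(2,n-1)}+1\bigr)-Q_{T(2,n-2)}$ shows $Q_{T(2,n)}$ has leading term $2x^{n-1}$, the only term of $xQ_{T(2,m)}+x$ of degree $m=\det L-1$ is $2x^{m}$, and the determinant estimate forces $L_-$ to be too simple for $\deg Q_{L_-}$ to reach degree $m$, so no link with $\deg Q_L=\det L-1$ other than a torus link survives. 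This closes the induction. The genuinely delicate ingredients are the behaviour of the determinant at a quasi-alternating crossing and the analysis of this exceptional case; everything else is routine propagation of Theorem~\ref{thm:QC} through the recursive structure of $\mathcal{Q}$.
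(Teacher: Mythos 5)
Your induction has gaps at both its anchor and its engine. The base case asserts that every quasi-alternating link with $\det L\le 4$ is a $(2,n)$-torus link, so that the statement is vacuous there; this is not available (the connected sum of two Hopf links is quasi-alternating of determinant $4$, and the paper explicitly remarks that the classification of determinant-$4$ quasi-alternating links is open). What can be proved is only the dichotomy of Lemma \ref{lem:det4} --- such a link is a $(2,\pm4)$-torus link or satisfies $\deg Q_L\le 2$ --- and already that requires passing to the double branched cover and invoking the surgery characterization of the unknot (\cite{KMOS}, together with \cite{M} and \cite{HR}); your proposal contains no substitute for this. Second, because you run the inductive step through the raw skein relation $Q_L=x(Q_{L_0}+Q_{L_\infty})-Q_{L_-}$, you must bound $\deg Q_{L_-}$. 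Even granting the determinant drop $\det L_-=|\det L_0-\det L_\infty|$, this gives no control of $\deg Q_{L_-}$: the link $L_-$ need not be quasi-alternating, so the inductive hypothesis does not apply to it, and the Brandt--Lickorish--Millett bound is in terms of the crossing number of $L_-$, which is not bounded by any function of $\det L$. Lemma \ref{lem:QC} ($\deg Q_L\le\max\{\deg Q_{L_0},\deg Q_{L_\infty}\}+1$) exists precisely to avoid $L_-$, and you could have cited it, but as written this step fails.

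The exceptional configuration --- one resolution the unknot, the other $T(2,m)$ with $m=\det L-1$ --- is the actual content of the theorem, and your plan for it does not go through. You intend to show that this configuration forces $L$ to be a $(2,n)$-torus link and conclude by contradiction, but that expected conclusion is false: the paper's analysis of exactly this case shows that the connected sum of a trefoil and a Hopf link (the pretzel $(2,3,0)$, determinant $6$) can occur; it is not a torus link and is disposed of only by computing $\deg Q_L=3\le\det L-2$ directly. Moreover, ``ruling out non-rational tangles'' is the entire difficulty, and you offer no tool for it; the cross-check you propose again bounds $\deg Q_{L_-}$ by the size of $\det L_-$, which is unfounded for a non-quasi-alternating link. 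The paper's route is to lift the band arc to the double branched cover, so that $\Sigma(L_0)=\pm L(q,1)$ arises from integral surgery on a knot $K\subset S^3$; then \cite{KMOS} and \cite{T} force $K$ to be the unknot or a trefoil, the unknot case is excluded by \cite{HR} (it would make $L$ a torus link), and the trefoil case is reduced to an explicit pretzel-link analysis via Sakuma's uniqueness of the strong inversion \cite{S}. This surgery-theoretic machinery, which the paper singles out as playing the key role, is absent from your proposal, so the argument does not close.
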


Of course, the $(2,n)$-torus link $L$ is alternating, so quasi-alternating, unless $n=0$.
It has determinant $|n|$, but it is easy to show that $\deg Q_L=|n|-1$.
Thus the conclusion of Theorem \ref{thm:main} does not hold.
Also, the figure-eight knot has determinant $5$, and its $Q$-polynomial
is $2x^3+4x^2-2x-3$.
Since the figure-eight knot is quasi-alternating,
the evaluation of Theorem \ref{thm:main} is sharp.

In the proof of Theorem \ref{thm:main},
the Dehn surgery characterization of the unknot by \cite{KMOS,OS2} plays a key role.

\section{Proof of Theorem \ref{thm:main}}

The next lemma is a key step of the proof of Theorem \ref{thm:QC}.

\begin{lemma}[Lemma 2.2 of \cite{QC}]\label{lem:QC}
Let $L$ be a link.
Then
\[
\deg Q_L\le \max\{\deg Q_{L_0}, \deg Q_{L_\infty}\}+1,
\]
where $L_0$ and $L_\infty$ are the resolutions of $L$ at any crossing.
\end{lemma}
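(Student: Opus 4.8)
The natural first move is to apply the defining skein relation of $Q$ at the crossing $c$ in question. Since $Q_{L_+}+Q_{L_-}=x(Q_{L_0}+Q_{L_\infty})$ is symmetric in $L_+\leftrightarrow L_-$, we get, whichever sign $c$ has,
\[
Q_L+Q_{L'}=x(Q_{L_0}+Q_{L_\infty}),
\]
where $L'$ is obtained from a diagram of $L$ by switching $c$, and $L_0,L_\infty$ are exactly the two smoothings of $c$. Put $M=\max\{\deg Q_{L_0},\deg Q_{L_\infty}\}$. The right-hand side has degree $\le M+1$, so $\deg(Q_L+Q_{L'})\le M+1$; hence the lemma holds unless the top-degree terms of $Q_L$ and $Q_{L'}$ cancel, which would force $\deg Q_L=\deg Q_{L'}>M+1$. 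In other words, it suffices to prove the same bound $\deg Q_{L'}\le M+1$ for the crossing-switched link.

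To get this I would fix $c$ — and hence $L_0$, $L_\infty$, and $M$ — and induct over all diagrams $D$ that contain $c$ with those two smoothings at $c$. A nugatory crossing of $D$ other than $c$ is removed by a Reidemeister~I move without changing the link of $D$ or $L_0,L_\infty$, so we may assume $D$ is reduced away from $c$; and if $c$ itself is nugatory then $\{L_0,L_\infty\}=\{K,K\sqcup U\}$ for the link $K$ of $D$, and since $Q_{K\sqcup U}=(2x^{-1}-1)Q_K$ has the same degree as $Q_K$, we get $M=\deg Q_K$ and $\deg Q_K\le M+1$ at once. For the remaining diagrams one fixes base points and a traversal of $D$, calls a crossing $\neq c$ bad when the strand met first runs under, and peels off the bad crossings one at a time; the base case is a diagram descending away from $c$, whose link is then an unlink or differs from one by the single switch at $c$, and there the skein relation at $c$ delivers the bound.

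I expect the real difficulty to be closing this induction rather than setting it up. Switching $c$ does not reduce the crossing number, so the induction must run on the number of bad crossings; but applying the skein relation at a bad crossing $c'\neq c$ produces two smoothed diagrams that still carry $c$, whose $c$-smoothings are smoothings of $L_0$ and of $L_\infty$ rather than $L_0$ and $L_\infty$ themselves — and smoothing a crossing is not monotone for $\deg Q$ (for instance the Hopf link is a smoothing of a crossing in a three-crossing diagram of the unknot, yet has $\deg Q>0$). So the local recursion does not close by itself. Getting past this seems to require extra input on the top-degree part of $Q$ — for example control of the leading coefficient of $Q_L$ that rules out the cancellation above outright, or a global argument comparing the resolving trees of $L$, $L_0$ and $L_\infty$ — and pinning that down is, I believe, the heart of the matter.
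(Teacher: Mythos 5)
Your opening reduction is correct: since the defining relation of $Q$ is symmetric in the two crossing signs, switching the chosen crossing $c$ gives $Q_L+Q_{L'}=x(Q_{L_0}+Q_{L_\infty})$, so the stated bound for $L$ would follow either from the same bound for the switched link $L'$ or from knowing that the top-degree terms of $Q_L$ and $Q_{L'}$ cannot cancel. But that is exactly where your argument stops. The induction you sketch (on bad crossings away from $c$, with diagrams descending away from $c$ as base case) does not close, for the reason you yourself identify: resolving a bad crossing $c'\neq c$ replaces $L_0$ and $L_\infty$ by their own resolutions at $c'$, and $\deg Q$ is not monotone under smoothing, so the inductive hypothesis speaks about the wrong links. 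Since you end by saying that the needed extra input (control of the top-degree part of $Q$, or a global comparison of resolving trees) is ``the heart of the matter,'' what you have produced is an accurate description of the difficulty rather than a proof: the one nontrivial point --- excluding cancellation between $Q_L$ and $Q_{L'}$, equivalently proving the bound for the crossing-switched link --- is left unestablished. (A minor slip besides: for a general nugatory crossing the two resolutions are $K_1\# K_2$ and $K_1\sqcup K_2$, not $K$ and $K\sqcup U$; the bound still holds there, since $Q_{K_1\sqcup K_2}=(2x^{-1}-1)Q_{K_1}Q_{K_2}$ has the same degree as $Q_{K_1\# K_2}=Q_{K_1}Q_{K_2}=Q_L$.)

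Be aware also that this paper contains no proof of the statement to compare against: Teragaito quotes it verbatim as Lemma 2.2 of \cite{QC} and uses it as a black box, and all the new content of the paper begins only after this lemma is granted. So nothing in the present text will fill your gap; the missing argument must be supplied from \cite{QC} itself, and your proposal, as written, does not replace it.
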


\begin{lemma}\label{lem:123}
Let $L$ be a quasi-alternating link.
If $\det L=1, 2$ or $3$, respectively,
then $L$ is the unknot, the Hopf link or a trefoil, respectively.
\end{lemma}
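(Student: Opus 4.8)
The plan is to handle $\det L=1$, then $\det L=2$, then $\det L=3$ in turn, the last two cases using the Montesinos trick together with the surgery characterisation of the unknot \cite{KMOS,OS2}. The case $\det L=1$ is elementary: along the construction of $\mathcal{Q}$ one checks inductively that every quasi-alternating link has determinant $\ge 1$ (the unknot does, and if $L$ arises from rule (2) at a crossing $c$ then $\det L=\det L_\infty+\det L_0\ge 2$); in particular a quasi-alternating link of determinant $1$ cannot use rule (2), so it is the unknot. For the remaining cases, recall that $|H_1(\Sigma_2(L');\mathbb{Z})|=\det L'$ whenever $\det L'\neq 0$, where $\Sigma_2(\cdot)$ is the double branched cover of $S^3$. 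Fix a quasi-alternating crossing $c$ of a diagram of $L$ sitting inside a ball $B$ that meets $L$ in the one-crossing tangle $\tau_c$, and let $S=L\cap(S^3\setminus\mathrm{int}\,B)$ be the complementary $2$-string tangle, so that $L_0,L_\infty,L$ are obtained by capping $S$ off with the three rational tangles $\tau_0,\tau_\infty,\tau_c$. Passing to double branched covers, $\Sigma_2(L_0),\Sigma_2(L_\infty),\Sigma_2(L)$ are the Dehn fillings $V_0\cup_{T^2}X,\ V_\infty\cup_{T^2}X,\ V_c\cup_{T^2}X$ of the fixed manifold $X:=\Sigma_2(B^3,S)$ along three pairwise distinct slopes $\gamma_0,\gamma_\infty,\gamma_c$ on $\partial X$, where each $V_\bullet$ is a solid torus (the branched double cover of a ball over a rational tangle).

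Suppose $\det L=2$. The quasi-alternating crossing forces $\det L_0=\det L_\infty=1$, so by the first case $L_0$ and $L_\infty$ are unknots and $\Sigma_2(L_0)=\Sigma_2(L_\infty)=S^3$. Regard the core of $V_0$ as a knot $K'\subset\Sigma_2(L_0)=S^3$; then $X=S^3\setminus N(K')$ and $\gamma_0$ is the meridian of $K'$. Since $\gamma_\infty\neq\gamma_0$, the manifold $\Sigma_2(L_\infty)=S^3$ is obtained from $S^3$ by a genuine Dehn surgery on $K'$ along a slope of numerator $\pm1$, and the surgery on the unknot along the same slope is again $S^3$. By the surgery characterisation of the unknot \cite{KMOS,OS2}, $K'$ is the unknot, so $X$ is a solid torus; hence $S$ is a rational tangle and $L$ is a $2$-bridge link. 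Moreover $\Sigma_2(L)=V_c\cup_{T^2}X$ is a lens space with $|H_1|=\det L=2$, so $\Sigma_2(L)=L(2,1)=\mathbb{RP}^3$, and the only $2$-bridge link of determinant $2$ is the Hopf link.

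Suppose $\det L=3$. Now the quasi-alternating crossing forces $\{\det L_0,\det L_\infty\}=\{1,2\}$, so by the cases already settled we may relabel so that $L_0$ is the unknot and $L_\infty$ is the Hopf link; thus $\Sigma_2(L_0)=S^3$ and $\Sigma_2(L_\infty)=\mathbb{RP}^3$. With $K'\subset\Sigma_2(L_0)=S^3$ the core of $V_0$ and $X=S^3\setminus N(K')$ as before, $\Sigma_2(L_\infty)=\mathbb{RP}^3$ is a Dehn surgery on $K'$ along $\gamma_\infty\neq\gamma_0$; since $|H_1(\mathbb{RP}^3)|=2$ this slope has numerator $\pm2$, and the surgery on the unknot along that slope is again $\mathbb{RP}^3$ (which is amphichiral, so orientations cause no trouble). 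Hence \cite{KMOS,OS2} again forces $K'$ to be the unknot, $X$ is a solid torus, $S$ is rational, $\Sigma_2(L)=V_c\cup_{T^2}X$ is a lens space of order $3$, and $L$ is a $2$-bridge link of determinant $3$; the only such link is a trefoil.

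The only non-elementary ingredient is the surgery characterisation of the unknot, which is exactly the tool flagged in the introduction; everything else is bookkeeping with the Montesinos trick. The two points that need care are that the three filling slopes $\gamma_0,\gamma_\infty,\gamma_c$ are genuinely pairwise distinct (so the surgeries on $K'$ are nontrivial) and that the numerator of the slope realising $\Sigma_2(L_\infty)$ equals $\det L_\infty$ (so that the comparison surgery on the unknot produces precisely $S^3$, resp.\ $\mathbb{RP}^3$, and the characterisation can be applied).
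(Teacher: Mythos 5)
Your argument is correct, but be aware that the paper does not actually reprove this lemma: its ``proof'' is a one-line citation to the proof of Proposition 3.2 in \cite{G}. What you have written is in effect a self-contained reconstruction of that circle of ideas, and it is the same technique the paper itself deploys later (Lemma \ref{lem:det4} and case (3) of the main proof): well-founded induction over the quasi-alternating recursion gives $\det\ge 1$ and disposes of $\det=1$; for $\det=2,3$ the Montesinos trick exhibits $\Sigma_2(L_0)$, $\Sigma_2(L_\infty)$, $\Sigma_2(L)$ as fillings of the branched double cover of the complementary tangle along three slopes, and the surgery characterization of the unknot from \cite{KMOS} (no nontrivial knot yields $S^3$ or $\mathbb{RP}^3$ by surgery; orientations are harmless since both manifolds admit orientation-reversing diffeomorphisms, as you note) forces the core knot $K'$ to be trivial, after which $\Sigma_2(L)$ is a lens space of order $\det L$ and $L$ is identified as the corresponding $2$-bridge link. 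Two steps you leave implicit are standard but worth naming: the three filling slopes are pairwise at distance one (this is what makes them distinct and the surgeries ``integral'' in the paper's phrasing elsewhere), and your passage from ``$X$ is a solid torus'' to ``$S$ is rational, so $L$ is $2$-bridge'' rests on Lickorish's theorem on tangles with solid-torus branched double covers --- alternatively one can quote \cite{HR} directly, as the paper does, to identify $L$ from its lens-space cover. So the content is the same as the cited source; what your route buys is independence from \cite{G}, at the cost of a page of bookkeeping the paper chose to outsource.
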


\begin{proof}
This is found in the proof of Proposition 3.2 in \cite{G}.
\end{proof}

\begin{lemma}\label{lem:det4}
Let $L$ be a quasi-alternating link.
If $\det L=4$, then
$L$ is the $(2,\pm 4)$-torus link or 
$\deg Q_L\le 2$.
\end{lemma}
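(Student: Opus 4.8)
The plan is to examine a quasi-alternating crossing of $L$, reduce matters by a determinant count, and in the remaining case pass to the double branched cover $\Sigma(\cdot)$ and invoke the Dehn surgery characterization of the unknot. Since $\det L=4$, $L$ is not the unknot, so $L$ has a quasi-alternating crossing $c$; let $L_0,L_\infty\in\mathcal Q$ be its two resolutions, so that $\det L_0+\det L_\infty=4$. A quasi-alternating link has positive determinant (an immediate induction from the definition), so the multiset $\{\det L_0,\det L_\infty\}$ is either $\{2,2\}$ or $\{1,3\}$. In the first case Lemma~\ref{lem:123} shows that $L_0$ and $L_\infty$ are both Hopf links, and since a Hopf link has $\deg Q=1$, Lemma~\ref{lem:QC} gives $\deg Q_L\le 1+1=2$, as desired. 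So from now on assume $\{\det L_0,\det L_\infty\}=\{1,3\}$; by Lemma~\ref{lem:123}, one resolution, say $L_0$, is the unknot and the other, $L_\infty$, is a trefoil. Here Lemma~\ref{lem:QC} only yields $\deg Q_L\le\deg Q_{L_\infty}+1=3$, which is not enough, so instead I would show that $L$ is a $2$-bridge link.

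Delete a ball neighbourhood of the crossing $c$ and let $M$ be the double cover of the complementary tangle, branched over its two strands. Then $\partial M$ is a torus and $\Sigma(L),\Sigma(L_0),\Sigma(L_\infty)$ are the Dehn fillings of $M$ along three slopes $\gamma_L,\gamma_0,\gamma_\infty$ that are pairwise at distance one, with $\gamma_L$ the Farey mediant of $\gamma_0$ and $\gamma_\infty$; moreover the relation $\det L=\det L_0+\det L_\infty$ is exactly the corresponding additivity among the orders of $H_1$ of these fillings. Since $\Sigma(L_0)=\Sigma(\mathrm{unknot})=S^3$, the manifold $M$ is the exterior of a knot $k\subset S^3$. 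If $k$ were knotted, its meridian would be the unique slope with trivial filling, so $\gamma_0$ would be that meridian and $\gamma_\infty,\gamma_L$ would be integral slopes; the homology count (orders $3$ and $4$) together with the mediant relation then forces $\Sigma(L_\infty)=S^3_{\pm 3}(k)$. But $\Sigma(L_\infty)$ is the double branched cover of a trefoil, hence a lens space of order $3$, which is also $(\pm 3)$-surgery on the unknot; so after possibly replacing $L$ by its mirror image (which affects neither the statement nor this hypothesis) the Dehn surgery characterization of the unknot~\cite{KMOS,OS2} would force $k$ to be the unknot, a contradiction. Hence $k$ is the unknot, $M$ is a solid torus, $\Sigma(L)$ is a lens space, and therefore $L$ is a $2$-bridge link. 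A $2$-bridge link of determinant $4$ is the $(2,4)$- or the $(2,-4)$-torus link, which completes the proof.

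The step I expect to be the main obstacle is the slope and orientation bookkeeping in the second paragraph: one must check that the three tangle slopes are genuinely mutually distance one (so that $\gamma_\infty$ really is an integral surgery slope on $k$), and that the lens space $\Sigma(L_\infty)$, with its natural orientation, is the correctly signed surgery on the unknot, so that the characterization theorem applies in the form one needs -- this is precisely where the passage to a mirror image is used. Granting this, the easy $\{2,2\}$ case and the closing appeal to the classification of two-bridge links of determinant $4$ are routine.
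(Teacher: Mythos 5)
Your proposal is correct and follows essentially the same route as the paper: split by the determinants of the two resolutions, handle the $\{2,2\}$ case via Hopf links and Lemma~\ref{lem:QC}, and in the $\{1,3\}$ case apply the Montesinos trick together with the Dehn surgery characterization of the unknot to show the relevant knot in $S^3=\Sigma(\text{unknot resolution})$ is trivial, so that $\Sigma(L)$ is a lens space of order $4$ and $L$ is the $(2,\pm4)$-torus link (the paper gets this last step from Hodgson--Rubinstein, which is the uncited fact behind your ``lens space $\Rightarrow$ $2$-bridge'' conclusion). Your phrasing via the tangle double cover $M$ and three distance-one fillings, with the Gordon--Luecke detour to identify $\gamma_0$ as the meridian, is just a repackaging of the paper's direct lift of the arc $\gamma$ to a knot $K$ with $\Sigma(L_0)=\pm L(3,1)$ given by integral surgery on $K$.
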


\begin{proof}
Let $c$ be a quasi-alternating crossing of $L$.
Let $L_0$ and $L_\infty$ be two resolutions of $L$ at the crossing $c$.
Then both of $L_0$ and $L_\infty$ are quasi-alternating.
Since $\det L=\det L_0 +\det L_\infty$,
$\{\det L_0,\det L_\infty\}=\{3,1\}$ or
$\det L_0=\det L_\infty=2$.

First, we may assume that $\det L_0=3$ and $\det L_\infty=1$.
By Lemma \ref{lem:123}, $L_0$ is a trefoil and $L_\infty$ is the unknot.
Let $\gamma$ be an unknotted arc connecting the strands at the resolution of $L_\infty$,
and let $K$ be the lift of $\gamma$ in the double branched cover $\Sigma(L_\infty)=S^3$.
Then $\Sigma(L_0)=\pm L(3,1)$ is obtained by an integral Dehn surgery on $K$.
By \cite[Theorem 1.1 and Corollary 8.4]{KMOS} (or \cite[Theorem 9]{T}) and \cite{M}, $K$ is the unknot.
Hence $\Sigma(L)$ is also obtained by an integral Dehn surgery on the unknot $K$,
so $\Sigma(L)=\pm L(4,1)$.
This implies that $L$ is the $(2,\pm 4)$-torus link by \cite{HR}.

Next, assume that $\det L_0=\det L_\infty=2$.
By Lemma \ref{lem:123} again, both $L_0$ and $L_\infty$ are Hopf links.
Note that $Q_{L_0}=Q_{L_\infty}=2x+1-2x^{-1}$.
By Lemma \ref{lem:QC}, $\deg Q_L\le 2$.
\end{proof}

It seems to be open that a quasi-alternating link with determinant 4
should be either the $(2,\pm 4)$-torus link or the connected sum of two Hopf links.

\begin{proof}[Proof of Theorem \ref{thm:main}]
The argument is done by induction on determinant of $L$.
First, we note that
$\det L\ge 4$  by Lemma \ref{lem:123}, under our assumption.

Suppose that $\det L=4$.
By Lemma \ref{lem:det4} and our assumption,
$\deg Q_L\le 2$.

Now, suppose that the conclusion is true for any quasi-alternating link
with determinant less than or equal to $m\,(\ge 4)$, which is not a $(2,n)$-torus link.
Let $L$ be a quasi-alternating link with determinant $m+1$.
Choose a quasi-alternating crossing $c$, and let $L_0$ and $L_\infty$
be the resolutions at $c$.
Then both of the resolutions are quasi-alternating, and 
the equation $\det L=\det L_0+\det L_\infty$ holds.
Thus $L_0$ and $L_\infty$ have determinant less than or equal to $m$.

We split the argument into 3 cases.

\medskip

(1)  Neither $L_0$ nor $L_\infty$ is a $(2,n)$-torus link.

By inductive hypothesis, we have $\deg Q_{L_0}\le \det L_0-2$
and $\deg Q_{L_\infty}\le \det L_\infty-2$.
Thus,
\begin{eqnarray*}
\deg Q_L &\le& \max\{\deg Q_{L_0}, \deg Q_{L_\infty}\}+1\\
&=& \deg Q_{L_*}+1\\
&\le & (\det L_*-2)+1\\
&\le & \det L-2,
\end{eqnarray*}
where $*\in \{0,\infty\}$ is chosen appropriately.
The last inequality follows from the equation $\det L=\det L_0+\det L_\infty$.

\medskip

(2) Only one of $L_0$ and $L_\infty$ is a $(2,n)$-torus link.

We may assume that $L_0$ is the $(2,p)$-torus link.
Then $\det L_0=|p|$ and $\deg Q_{L_0}=|p|-1$.
For $L_\infty$, we have
$\deg Q_{L_\infty}\le \det L_\infty-2$ by inductive hypothesis.
If $\deg Q_{L_0}\le \deg Q_{L_\infty}$, then
$\deg Q_L\le (\det L_\infty-2)+1\le \det L-2$ as in (1).

Otherwise, we have $\deg Q_L\le \deg Q_{L_0}+1=|p|$.
Since $\det L_\infty\ge 4$ by Lemma \ref{lem:123},
we have $|p|=\det L_0\le \det L-4$.
Hence $\deg Q_L\le \det L-4$.

\medskip

(3) Both of $L_0$ and $L_\infty$ are $(2,n)$-torus links.

We assume that $L_\infty$ is the $(2,p)$-torus link, and $L_0$ is the $(2,q)$-torus link.
Moreover, we may assume that $|p|\le |q|$.
Then
\[
\deg Q_L\le \max\{\deg Q_{L_0}, \deg Q_{L_\infty}\}+1=|q|=\det L-|p|.
\]
Since $L_\infty$ is quasi-alternating,
$p\ne 0$.
If $|p|\ne 1$, then we have $\deg Q_L\le \det L-2$.
If $|p|=1$, then $L_\infty$ is the unknot.
Then, as in the proof of Lemma \ref{lem:det4},
take an unknotted arc $\gamma$ connecting the strands at the resolution of $L_\infty$.
Let $K$ be the lift of $\gamma$ in $\Sigma(L_\infty)=S^3$.
Then an integral Dehn surgery on $K$ yields $\Sigma(L_0)=L(q,1)$.
We remark that $|q|=\det L-1=m\ge 4$.
By \cite[Theorem 1.1]{KMOS} and \cite[Theorem 9]{T}, $K$ is the unknot,
or a trefoil.

Assume that $K$ is the unknot.
Then, since
$\Sigma(L)$ is obtained by an integral Dehn surgery on $K$,
it is a lens space $\pm L(r,1)$, with $r=\det L$.
But this implies that $L$ is the $(2,\pm r)$-torus link by \cite{HR}, a contradiction.

Finally, assume that $K$ is a trefoil.
By \cite[Theorem 9]{T}, $|q|=5$. 
Thus we have $\det L=6$.
For a trefoil $K$ in $\Sigma(L_\infty)$, it is well known that
there is the unique inverting involution (see \cite{S}).
We may assume that $K$ is right-handed.
By taking the quotient of $(\Sigma(L_\infty),K)$ under the involution,
we can recover $\gamma$ as in Figure \ref{fig:gamma}, with
ignoring the framing of $\gamma$.

\begin{figure}[tb]
\includegraphics*[scale=0.8]{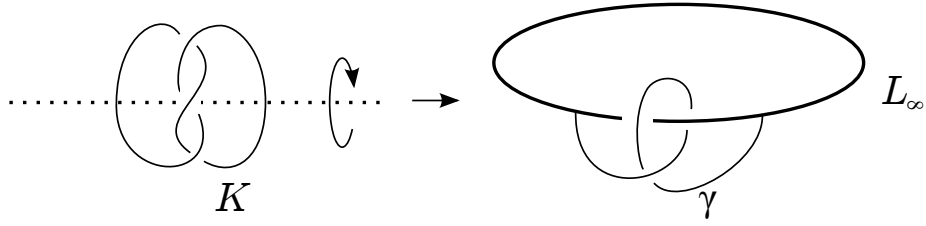}
\caption{$(L_\infty,\gamma)$}\label{fig:gamma}
\end{figure}

Since $L_0$ is obtained from $L_\infty$ by banding along $\gamma$,
we have Figure \ref{fig:L0}, where $k$ denotes
the number of half-twists.
(If $k\ge 0$, then the twists are right-handed.
Otherwise, left-handed.)

\begin{figure}[tb]
\includegraphics*[scale=0.8]{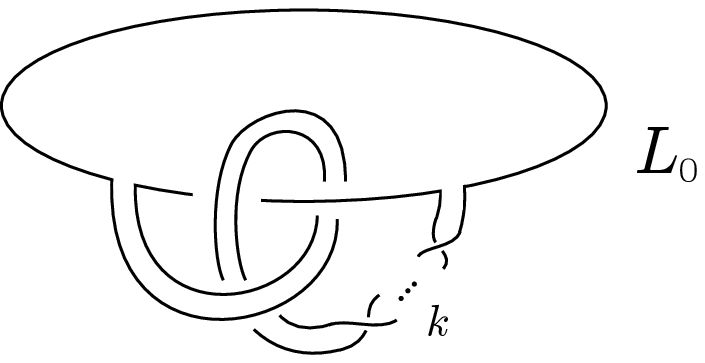}
\caption{$L_0$}\label{fig:L0}
\end{figure}

Then $L_0$ is the pretzel link of type $(2,-3,k-2)$.
Since $L_0$ is $2$-bridge, $|k-2|\le 1$.
Hence $k=1,2$ or $3$.
The only possibility for $L_0$ to be the $(2,\pm 5)$-torus knot
is $k=1$.
Then $L$ is the pretzel link of type $(2,3,0)$ or $(2,3,-2)$.
The former gives the connected sum of a trefoil and the Hopf link,
so $\deg Q_L=3$.
Hence $\deg Q_L=\det L-3$.
The latter has determinant $4$, contradicting $\det L=6$.
\end{proof}

\begin{example}
Let $K$ be the knot $10_{140}$ in the knot table.
It is hyperbolic and has determinant $9$.
But the $Q$-polynomial is 
$2x^8+4x^7-12x^6-22x^5+24x^4+32x^3-24x^2-12x+9$, so
$K$ is not quasi-alternating by Theorem \ref{thm:main}.
The evaluation (Theorem \ref{thm:QC}) of Qazaqzeh and Chbili \cite{QC}
cannot detect this fact.
We remark that this knot is known to be non-quasi-alternating, because
it has thick odd Khovanov homology (see \cite[p.2456]{CK}).

Also, among $11$, $12$-crossing non-alternating knots expressed in Dowker-Thistlethwaite notation,
$12_{n0025}$ ,$12_{n0093}$,$12_{n0115}$,$12_{n0138}$,
$12_{n0199}$,$12_{n0321}$,$12_{n0355}$,
$12_{n0374}$,$12_{n0433}$,\\
$12_{n0457}$,$12_{n0648}$
have determinant $11$, but the degree of their $Q$-polynomials is $10$
(see \cite{CL}).
Thus these are not quasi-alternating.
Again, this fact was confirmed in \cite{J} by using homologically thickness.
\end{example}

\bibliographystyle{amsplain}

\begin{thebibliography}{BLM}

\bibitem{B}
C. Bankwitz,  
\textit{\"{U}ber die Torsionszahlen der alternierenden Knoten},
Math. Ann. \textbf{103} (1930), no. 1, 145--161. 


\bibitem{BLM}
R. D. Brandt, W. B. R. Lickorish and K. C. Millett,
\textit{A polynomial invariant for unoriented knots and links},
Invent. Math. \textbf{84} (1986), no. 3, 563--573.

\bibitem{CL}
J. C. Cha and C. Livingston,
KnotInfo: Table of knot invariants,
\texttt{http://www.indiana.edu/knotinfo}.


\bibitem{CK}
A. Champanerkar and I. Kofman, 
\textit{Twisting quasi-alternating links},
Proc. Amer. Math. Soc. \textbf{137} (2009), no. 7, 2451--2458.

\bibitem{C}
R. Crowell, 
\textit{Nonalternating links}, 
Illinois J. Math. \textbf{3} (1959), 101--120. 


\bibitem{G}
J. Greene,
\textit{Homologically thin, non-quasi-alternating links},
Math. Res. Lett. \textbf{17} (2010), no. 1, 39--49. 

\bibitem{GW}
J. Greene and L. Watson,
\textit{Turaev torsion, definite $4$-manifolds, and quasi-alternating knots},
Bull. Lond. Math. Soc. \textbf{45} (2013), no. 5, 962--972. 

\bibitem{Ho}
C. F. Ho,
\textit{A new polynomial for knots and links - preliminary report},
Abstracts Amer. Math. Soc. \textbf{6} (1985), no. 4, 300, Abstract 821-57-16.

\bibitem{HR}
C. Hodgson and J. H. Rubinstein, 
\textit{Involutions and isotopies of lens spaces},
Knot theory and manifolds (Vancouver, B.C., 1983), 60--96, Lecture Notes in Math., 1144, Springer, Berlin, 1985. 

\bibitem{J}
S. Jablan,
\textit{Tables of quasi-alternating knots with at most $12$ crossings},
preprint, \texttt{arXiv:1404.4965}.

\bibitem{KMOS}
P. Kronheimer, T. Mrowka, P. Ozsv\'{a}th and Z. Szab\'{o},
\textit{Monopoles and lens space surgeries},
Ann. of Math. (2) \textbf{165} (2007), no. 2, 457--546. 

\bibitem{MO}
C. Manolescu and P. Ozsv\'{a}th,
\textit{On the Khovanov and knot Floer homologies of quasialternating links},
in Proceedings of the 14th Gokova Geometry-Topology Conference, (2007), pp. 60--81.

\bibitem{M}
L. Moser, 
\textit{Elementary surgery along a torus knot},
Pacific J. Math. \textbf{38} (1971), 737--745. 

\bibitem{OS}
P. Ozsv\'{a}th and Z. Szab\'{o},
\textit{On the Heegaard Floer homology of branched double-covers},
Adv. Math. \textbf{194} (1) (2005), 1--33. 

\bibitem{OS2}
P. Ozsv\'{a}th and Z. Szab\'{o},
\textit{Holomorphic disks and genus bounds},
Geom. Topol. \textbf{8} (2004), 311--334.

\bibitem{QC}
K. Qazaqzeh and N. Chbili,
\textit{A new obstruction of quasi-alternating links},
preprint, \texttt{arXiv:1406.0279}.

\bibitem{QQJ}
K. Qazaqzeh, B. Qublan and A. Jaradat,
\textit{A remark on the determinant of quasi-alternating links},
J. Knot Theory Ramifications \textbf{22} (2013), no. 6, 1350031, 13 pp.

\bibitem{S}
M. Sakuma,
\textit{On strongly invertible knots},
Algebraic and topological theories (Kinosaki, 1984), 176--196, Kinokuniya, Tokyo, 1986. 

\bibitem{T}
M. Tange,
\textit{Ozsv\'{a}th Szab\'{o}'s correction term and lens surgery},
Math. Proc. Cambridge Philos. Soc. \textbf{146} (2009), no. 1, 119--134.

\bibitem{Wi}
T. Widmer, 
\textit{Quasi-alternating Montesinos links},
J. Knot Theory Ramifications \textbf{18} (2009), no. 10, 1459--1469.

\end{thebibliography}

\end{document}